\theoremstyle{plain}
\newtheorem{theorem}{Theorem}
\newtheorem{proposition}[theorem]{Proposition}
\newtheorem*{lemma}{Lemma}
\theoremstyle{definition}
\newtheorem*{question}{Question}
\newtheorem*{questions}{Questions}
\newtheorem*{examples}{Examples}
\newtheorem*{remark}{Remark}
\def\after{\raise.25ex\hbox{$\scriptstyle\thinspace\circ\thinspace$}}
\def\Bd{\partial}
\def\C{\mathbb{C}}
\def\d{\delta}
\def\e{\varepsilon}
\def\from{\colon\thinspace}
\def\Hopf{\mathscr{H}}
\def\R{\mathbb{R}}
\def\suchthat{\thinspace\colon\thinspace}
\def\Suchthat{\mid}
\def\zero{\mathbf{0}}
\newcommand{\bydef}[1]{\emph{#1}}
\begin{document}
\bibliographystyle{amsplain}


\def\evenhead{{\protect\centerline{\textsl{\large{Lee Rudolph}}}\hfill}}

\def\oddhead{{\protect\centerline{\textsl{\large{Whitehead's Integral Formula
and Isolated Critical Points}}}\hfill}}

\pagestyle{myheadings} \markboth{\evenhead}{\oddhead}

\thispagestyle{empty} \noindent{{\small\rm Pure and Applied
Mathematics Quarterly\\ Volume 6, Number 2\\ (\textit{Special Issue:
In honor  of \\ Michael Atiyah and Isadore Singer)\\
\textup{545--554, 2010 (ArXival redaction)}} \vspace*{1.5cm} \normalsize

\begin{center}
\Large {\bf Whitehead's Integral Formula, Isolated Critical
\\Points, and the Enhancement of the Milnor Number}
\end{center}

\begin{center}
\large Lee Rudolph
\end{center}

\begin{center}
\it for Is Singer on his $85^\textit{th}$ birthday
\end{center}
\footnotetext{Received March 7, 2008.\\During various parts of the
period when this research was performed, the author was partially
supported by the Fonds National Suisse, and by NSF awards
DMS-8801915, DMS-9504832, DMS-0308894, and IIS-0713335.}

\medskip
\begin{center}
\begin{minipage}{5in}
\noindent {\bf Abstract:} J.\ H.\ C.\ Whitehead gave an elegant
integral formula for the Hopf invariant $\Hopf(\mathbf{p})$ of a
smooth map $\mathbf{p}$ from the $3$-sphere to the $2$-sphere.
Given an open book structure $\mathbf{b}$ on the $3$-sphere (or,
essentially equivalently, an isolated critical point of a map $F$
from $\R^4$ to $\R^2$), Whitehead's formula can be ``integrated
along the fibers'' to express $\Hopf(\mathbf{p})$ as the integral of
a certain $1$-form over $S^1$.  In case $\mathbf{p}$ is
geometrically related to $\mathbf{b}$ (or $F$)---for instance, if
$\mathbf{p}$ is the map (one component of the fiberwise generalized
Gauss map of $F$) whose Hopf invariant $\lambda(K)$ is the
``enhancement of the Milnor number'' of the fibered link $K\subset
S^3$ associated to $F$ (or $\mathbf{b}$), previously studied by the
author and others---it might be hoped that this $1$-form has
geometric significance.  This note makes that hope somewhat more
concrete, in the form of several
speculations and questions.\\
\noindent {\bf Keywords:} Enhancement, fibered link, Hopf invariant,
Milnor number, open book, Whitehead's integral formula
\end{minipage}
\end{center}

\section{Whitehead's integral formula and isolated critical points}
\label{WIF and ICP}

Let $\mathbf{p}\from S^3\to S^2$ be smooth. According to Whitehead
\cite{Whitehead} (see also \cite{Whitney,BottTu}), the Hopf
invariant $\Hopf(\mathbf{p})$ of $\mathbf{p}$ can be calculated as
follows. Let $\Omega$ be the volume $2$-form on $S^2$, normalized to
give $S^2$ volume $1$. Since $\Omega$ is closed, so is its pull-back
$\Omega_\mathbf{p} = \mathbf{p}^*(\Omega)$; since
$H^2(S^3;\R)=\{0\}$, $\Omega_\mathbf{p}$ is in fact exact. Let
$\eta_\mathbf{p}$ be a $1$-form on $S^3$ with
$d\eta_\mathbf{p}=\Omega_\mathbf{p}$.  If $g$ is any $0$-form on
$S^3$, then also $d(\eta_\mathbf{p}+dg)=\Omega_\mathbf{p}$, and
since $H^1(S^3;\R)=\{0\}$, every solution $\eta$ of
$d\eta=\Omega_\mathbf{p}$ is of the form $\eta_\mathbf{p}+dg$.
Thus, pointwise, the $3$-form
$\eta_\mathbf{p}\wedge\Omega_\mathbf{p}$ depends strongly on the
choice of $\eta_\mathbf{p}$. Nonetheless, Whitehead showed that its
integral is independent of this choice:
\begin{equation}\label{WIF}
\tag{WIF}\int\limits_{\textstyle S^3} \eta_\mathbf{p}%
  \wedge\Omega_\mathbf{p} = \Hopf(\mathbf{p}).
\end{equation}

It is clear that \eqref{WIF} remains valid, \textit{mutatis
mutandis}, if subjected to any of various modifications: the target
of $\mathbf{p}$ could be $\R^3\setminus\{\zero\}$ or
$(\R^3\setminus\{\zero\})\times\R$, the domain of $\mathbf{p}$ could
be a punctured neighborhood of $\zero$ in $\R^4$, and (in the latter
case) the integration in \eqref{WIF} could be over any $3$-cycle in
the domain of $\mathbf{p}$ dual to the puncture at $\zero$.

Let $U$ be a neighborhood of $\zero$ in $\R^4$, $F=(f,g)\from
U\to\R^2$ a map with $F(\zero)=\zero$ such that $F$ is smooth on
$U\setminus\{\zero\}$ and $F$ is continuous at $\zero$.  We will say
that $\zero$ \bydef{is an isolated critical point of $F$} provided
that there exists $\d>0$ such that, if $0<\|\mathbf{x}\|\le\d$, then
the derivative $DF(\mathbf{x})$ has rank $2$; and (following
Kauffman \& Neumann \cite{Kauffman-Neumann}) we will say that
$\zero$ is a \bydef{tame} isolated critical point of $F$ if,
further, for all sufficently small $\d>0$,
\begin{inparaenum}[(a)]
\item\label{KN1}
$F^{-1}(\zero)$ and $\d S^3$ intersect transversely, and
\item\label{KN2}
there exists $\e(\d)>0$ such that, if $0<\e\le\e(\d)$, then
$F^{-1}(\e D^2) \cap \d D^4$ is a topological $4$-disk that is
smooth except for corners along $F^{-1}(\e S^1)\cap \d S^3$.
\end{inparaenum}

\begin{examples}\label{icp examples}
\begin{inparaenum}[(a)]
\item\label{regular point is icp}
If $F$ is smooth at $\zero$ and $\zero$ is a regular point of $F$,
then $\zero$ is a tame isolated critical point of $F$.
\item\label{Milnor icps}
If $F$ is (the real-polynomial mapping underlying) a
complex-polynomial mapping $(\C^2,\zero)\to(\C,0)$ without repeated
factors, then $\zero$ is a tame isolated critical point of $F$. The
differential topology of this case (and its higher-dimensional
generalization), first studied by Milnor
\cite{Milnor:singular-points}, continues to be a lively topic of
investigation.
\item\label{real case}
Let $F\from(\R^4,\zero)\to(\R^2,\zero)$ be a real-polynomial
mapping.  For a generic such $F$, the real-algebraic set of critical
points of $F$ is $1$-dimensional at each of its points, so $\zero$
is an isolated critical point of $F$ if and only if it is a regular
point. Nonetheless, there exist many real-polynomial mappings $F$
for which $\zero$ is a non-regular isolated critical point
(necessarily tame, \cite{Kauffman-Neumann}). Among these mappings,
those that underlie complex-polynomial mappings are neither the
only, nor necessarily the most interesting, examples.  (A wide
variety of essentially non-complex examples are known; see
\cite{Looijenga,Perron,Rudolph:isocp-1,%
PichonSeade,Pichon,BodinPichon,HirasawaRudolph}.)
\end{inparaenum}
\end{examples}

\begin{proposition}\label{ICP integral formula}
If $F=(f,g)\from U\to\R^2$ has a tame isolated critical point at
$\zero$, and $\mathbf{p}\from U\setminus\{\zero\}\to
\R^3\setminus\{\zero\}$ is a smooth map, then for all sufficiently
small $\d>0$,
\begin{align}
\Hopf(\mathbf{p})=&
\smash[t]{%
                   \lim_{\e\to 0}%
                   \int\limits_{\textstyle{\e S^1}}%
                   \hskip.1in
                   \int\limits_{\textstyle{%
                        {F^{-1}(\e\mathbf{u}})\cap\d D^4}}%
                   \hskip-.4in\eta_{\mathbf{p}}\wedge\Omega_{\mathbf{p}}
}
\label{ICPIF--first form}
\end{align}
\end{proposition}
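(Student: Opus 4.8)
The plan is to recognize the iterated integral as the Whitehead integral \eqref{WIF} over a single piecewise-smooth $3$-cycle, rewritten by Fubini's theorem as an integration along the fibers of the open-book fibration cut out by $F$. Throughout I would fix $\d>0$ small enough that both tameness conditions hold, and record first that $\Omega_\mathbf{p}$ is a closed $2$-form, so $d(\eta_\mathbf{p}\wedge\Omega_\mathbf{p})=\Omega_\mathbf{p}\wedge\Omega_\mathbf{p}=0$; thus $\eta_\mathbf{p}\wedge\Omega_\mathbf{p}$ is a closed $3$-form on $U\setminus\{\zero\}$, and by the modification of \eqref{WIF} noted above its integral over any $3$-cycle there dual to the puncture equals $\Hopf(\mathbf{p})$, independently of the choice of $\eta_\mathbf{p}$.

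For $0<\e\le\e(\d)$ set $B_\e=F^{-1}(\e D^2)\cap\d D^4$, which by tameness~(\ref{KN2}) is a topological $4$-disk containing $\zero$ in its interior, smooth except for corners along $F^{-1}(\e S^1)\cap\d S^3$. Its boundary splits as $\Bd B_\e=C_\e\cup D_\e$, where $C_\e=F^{-1}(\e S^1)\cap\d D^4$ is the union of the pages $F^{-1}(\e\mathbf{u})\cap\d D^4$ ($\mathbf{u}\in S^1$), and $D_\e=F^{-1}(\e D^2)\cap\d S^3$ is the part lying on the small sphere. Since $\zero\in\operatorname{int}B_\e$, the piecewise-smooth $3$-sphere $\Bd B_\e$ is dual to the puncture, so $\int_{\Bd B_\e}\eta_\mathbf{p}\wedge\Omega_\mathbf{p}=\Hopf(\mathbf{p})$ for every such $\e$; orienting $C_\e$ and $D_\e$ as pieces of $\Bd B_\e$ then gives
\[
\int_{C_\e}\eta_\mathbf{p}\wedge\Omega_\mathbf{p}=\Hopf(\mathbf{p})-\int_{D_\e}\eta_\mathbf{p}\wedge\Omega_\mathbf{p}.
\]

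Next I would show that $\int_{D_\e}\eta_\mathbf{p}\wedge\Omega_\mathbf{p}\to0$ as $\e\to0$. By tameness~(\ref{KN1}), $F$ meets $\d S^3$ transversely along $K=F^{-1}(\zero)\cap\d S^3$, so the sets $D_\e$ form a decreasing family of tubular neighborhoods of $K$ in $\d S^3$ with $\bigcap_\e D_\e=K$; as $K$ is $1$-dimensional its $3$-dimensional volume vanishes, so $\operatorname{vol}_3(D_\e)\to0$. Because $\eta_\mathbf{p}\wedge\Omega_\mathbf{p}$ is smooth, hence bounded by some $M$, on a neighborhood of the compact set $\d S^3$ (which avoids $\zero$), the estimate $\bigl|\int_{D_\e}\eta_\mathbf{p}\wedge\Omega_\mathbf{p}\bigr|\le M\operatorname{vol}_3(D_\e)\to0$ follows, whence $\int_{C_\e}\eta_\mathbf{p}\wedge\Omega_\mathbf{p}\to\Hopf(\mathbf{p})$. (The value of $\int_{C_\e}$ alone depends on the choice of $\eta_\mathbf{p}$ through a boundary term over the shrinking corner $F^{-1}(\e S^1)\cap\d S^3$, whose area is $O(\e)$; this term likewise tends to $0$, so the limit is well defined and choice-independent.)

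Finally I would invoke the fibration to convert $\int_{C_\e}$ into the iterated integral. By tameness the restriction $F\from C_\e\to\e S^1$ is a locally trivial fibration---the open book, i.e.\ the Milnor fibration of Kauffman--Neumann---whose fiber over $\e\mathbf{u}$ is the page $F^{-1}(\e\mathbf{u})\cap\d D^4$, oriented compatibly with $C_\e$ and $\e S^1$. Fubini's theorem, in the form of integration along these $2$-dimensional fibers (the inner integral denoting the fiberwise pushforward), then yields
\[
\int_{C_\e}\eta_\mathbf{p}\wedge\Omega_\mathbf{p}=\int_{\e S^1}\int_{F^{-1}(\e\mathbf{u})\cap\d D^4}\eta_\mathbf{p}\wedge\Omega_\mathbf{p},
\]
and passing to the limit gives \eqref{ICPIF--first form}. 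The main obstacle is precisely this limit: extracting, from the tameness hypotheses alone, both the local triviality of $F\from C_\e\to\e S^1$ up to its corners and the control needed for $\int_{D_\e}\to0$ (equivalently, that the pages do not accumulate area near $K$ too fast as $\e\to0$). The remainder is Stokes' theorem together with the bookkeeping of orientations at the corner $F^{-1}(\e S^1)\cap\d S^3$.
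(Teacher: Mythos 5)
Your proposal is correct and takes essentially the same route as the paper's proof: both split $\Bd(F^{-1}(\e D^2)\cap\d D^4)$ into the pieces $F^{-1}(\e S^1)\cap\d D^4$ and $F^{-1}(\e D^2)\cap\d S^3$, apply the (modified) formula \eqref{WIF} to this $3$-cycle dual to the puncture, discard the second piece in the limit as it shrinks to the link $F^{-1}(\zero)\cap\d S^3$, and conclude by integration along the fiber. Your explicit checks---that $\eta_\mathbf{p}\wedge\Omega_\mathbf{p}$ is closed, the volume bound forcing $\int_{D_\e}\to 0$, and the choice-independence via the $O(\e)$ corner term---merely make precise steps the paper leaves implicit.
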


\begin{proof}
By hypothesis \eqref{KN2} in the definition of tameness, the
$3$-manifold-with-corners $\Bd(F^{-1}(\e D^2) \cap \d D^4)$ (with
its natural orientation) is a $3$-cycle homologous to $\d S^3$ in
$U\setminus\{\zero\}$, provided that $\d$ and $\e$ are sufficiently
small.  As a $3$-cycle, $\Bd(F^{-1}(\e D^2) \cap \d D^4)$ is the sum
of two $3$-chains, $C_1(\d,\e)=F^{-1}(\e S^1) \cap \d D^4$ and
$C_2(\d,\e)= F^{-1}(\e D^2) \cap \d S^3$; the latter, by hypothesis
\eqref{KN1} in the definition of tameness, is a neighborhood of a
smooth link $L=\d S^3\cap F^{-1}(\zero)$ in $\d S^3$, and it is
clear that for a fixed $\d$ this neighborhood shrinks down to $L$ as
$\e$ goes to $0$.  By the comments after \eqref{WIF},
\begin{equation*}
\Hopf(\mathbf{p})=\hskip-12pt
                  \int\limits_{\textstyle C_1(\d,\e)}
                  \hskip-12pt \eta_\mathbf{p}%
                  \wedge\Omega_\mathbf{p}+
                  \hskip-12pt \int\limits_{\textstyle C_2(\d,\e)}
                  \hskip-12pt  \eta_\mathbf{p}%
                  \wedge\Omega_\mathbf{p},
\end{equation*}
so
\begin{equation*}
\Hopf(\mathbf{p})=\lim\limits_{\e\to 0}\
                  \hskip-12pt \int\limits_{\textstyle C_1(\d,\e)}
                  \hskip-12pt \eta_\mathbf{p}%
                  \wedge\Omega_\mathbf{p};
\end{equation*}
this becomes \eqref{ICPIF--first form} after integration along the
fiber.
\end{proof}

\section{Whitehead's integral formula and open books}
\label{WIF and OB} Tame isolated critical points are closely related
to open-book structures.  Here, an \bydef{open book} on $S^3$ is a
smooth map $\mathbf{b}\from S^3\to\R^2$ such that $\zero$ is a
regular value of $\mathbf{b}$
and $\mathbf{b}/\|\mathbf{b}\|\from S^3\setminus %
\mathbf{b}^{-1}(\zero)\to S^1$ is a fibration.  Given an open book
$\mathbf{b}$, it is easy to check that $\zero$ is a tame isolated
critical point of $\mathop{co}(\mathbf{b})\from\R^4\to\R^2$, where
$\mathop{co}(\mathbf{b})(t\mathbf{x})=%
t\mathbf{b}(\mathbf{x})$ for all $\mathbf{x}\in S^3$ and $t\ge0$.
Conversely \cite{Kauffman-Neumann}, if $\zero$ is a tame isolated
critical point of $F \from (U,\zero)\to(\R^2,\zero)$, then there is
an open book $\mathbf{b}_F$ (unique up to an obvious equivalence
relation) such that the isolated critical points at $\zero$ of $F$
and of $\mathop{co}(\mathbf{b}_F)$ are equivalent (again, in an
obvious sense).

The \bydef{binding} of the open book $\mathbf{b}$ is
$L_\mathbf{b}=\mathbf{b}^{-1}(\zero)$. The
\bydef{$\theta^\textrm{th}$ page} of $\mathbf{b}$
is $S_\mathbf{b}(\theta) =\mathbf{b}^{-1}(\{t(\cos(\theta),%
\sin(\theta)) \Suchthat t\ge 0\})$.  It is immediate from the
definitions that $L_\mathbf{b}$ is a smooth link (naturally oriented
by the orientations of $S^3$ and $S^2$) and that each
$S_\mathbf{b}(\theta)$ is a Seifert surface bounded by
$L_\mathbf{b}$.  In fact, the binding of an open book is a fibered
link, and each page is a fiber surface for that link.

\begin{proposition}\label{open book integral formula}
If $\mathbf{b}\from S^3\to\R^2$ is an open book, and
$\mathbf{p}\from S^3\to S^2$ is a smooth map, then
\begin{align}\Hopf(\mathbf{p})
                 &=\lim_{\e\to 0}\hskip6pt%
                   \int\limits_{\textstyle{S^1}}%
                   \hskip.1in\int\limits_{\textstyle{S_\mathbf{b}(\theta)
                                 \setminus
                                 \mathbf{b}^{-1}(\e(\mathrm{Int}\,D^2))}}
                   \hskip-.65in\eta_{\mathbf{p}}\wedge\Omega_{\mathbf{p}}
\label{OBIF--first form}
\end{align}
\end{proposition}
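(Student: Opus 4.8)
The plan is to remain entirely on $S^3$ and to reduce \eqref{OBIF--first form} to \eqref{WIF} by decomposing the domain in a way adapted to the open book $\mathbf{b}$ and then applying integration along the fiber exactly once. Because $\mathbf{p}\from S^3\to S^2$ is smooth, the $3$-form $\eta_{\mathbf{p}}\wedge\Omega_{\mathbf{p}}$ is smooth, hence bounded, on the compact manifold $S^3$; this makes every integral below finite and disposes of all convergence issues at the outset.

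First I would fix a small $\e>0$ and split $S^3$ along $\mathbf{b}$ as $S^3 = T_\e \cup V_\e$, where $T_\e = \mathbf{b}^{-1}(\e\,\mathrm{Int}\,D^2)$ and $V_\e = \mathbf{b}^{-1}(\R^2\setminus\e\,\mathrm{Int}\,D^2)$. Since $\zero$ is a regular value of $\mathbf{b}$, the binding $L_\mathbf{b}=\mathbf{b}^{-1}(\zero)$ is a smooth link and, for small $\e$, $T_\e$ is a tubular neighborhood of it that decreases to $L_\mathbf{b}$ as $\e\to 0$; as $L_\mathbf{b}$ has measure zero and the integrand is bounded, $\int_{T_\e}\eta_{\mathbf{p}}\wedge\Omega_{\mathbf{p}}\to 0$. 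Combined with \eqref{WIF} this gives
\[
\Hopf(\mathbf{p}) = \int_{S^3}\eta_{\mathbf{p}}\wedge\Omega_{\mathbf{p}} = \lim_{\e\to 0}\int_{V_\e}\eta_{\mathbf{p}}\wedge\Omega_{\mathbf{p}} .
\]

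Next I would exhibit $V_\e$ as a fibration over $S^1$. The defining property of an open book is that $\mathbf{b}/\|\mathbf{b}\|\from S^3\setminus L_\mathbf{b}\to S^1$ is a fibration; for small $\e$, $V_\e$ is a compact submanifold-with-boundary of $S^3\setminus L_\mathbf{b}$ on which $\mathbf{b}/\|\mathbf{b}\|$ restricts to a proper submersion onto $S^1$, with fiber over $(\cos\theta,\sin\theta)$ exactly the truncated page $S_\mathbf{b}(\theta)\setminus\mathbf{b}^{-1}(\e\,\mathrm{Int}\,D^2)$. Integration along the fiber of this fibration—legitimate for a proper submersion with compact fibers-with-boundary, and introducing no Stokes boundary terms, as it merely reorganizes a Fubini integral—then yields, for each fixed $\e$,
\[
\int_{V_\e}\eta_{\mathbf{p}}\wedge\Omega_{\mathbf{p}}
= \int_{S^1}\ \int_{S_\mathbf{b}(\theta)\setminus\mathbf{b}^{-1}(\e\,\mathrm{Int}\,D^2)}\eta_{\mathbf{p}}\wedge\Omega_{\mathbf{p}} .
\]
Combining the two displays and letting $\e\to 0$ produces \eqref{OBIF--first form}.

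The one step demanding care—and the place I expect the only real friction—is matching orientations so that the fiber-integration identity holds with the correct sign: the pages $S_\mathbf{b}(\theta)$ must carry the standard open-book orientation (cooriented by increasing $\theta$, compatibly with the orientations of $S^3$ and of $S^1$), chosen precisely so that the orientation of $V_\e$ agrees with (fiber) followed by the pullback of the base orientation. Once this bookkeeping is pinned down, the remaining ingredients—boundedness of the integrand, the vanishing of the tube integral, and Fubini/integration along the fiber—are routine. An alternative route is to invoke Proposition \ref{ICP integral formula} for $F=\mathop{co}(\mathbf{b})$ with $\mathbf{p}$ extended radially to $\R^4\setminus\{\zero\}$; there the fibers $F^{-1}(\e\mathbf{u})\cap\d D^4$ project radially and diffeomorphically onto the truncated pages, and the radial invariance of the extended data converts \eqref{ICPIF--first form} into \eqref{OBIF--first form}. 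I would nonetheless present the direct argument above, since it avoids the domain/target mismatch between the two propositions and is appreciably cleaner.
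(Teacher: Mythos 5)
Your proposal is correct and is essentially the argument the paper intends: its proof of this proposition is the single sentence ``entirely analogous to Proposition~\ref{ICP integral formula},'' and the analogous argument is precisely your decomposition of $S^3$ into a shrinking tube about the binding (whose contribution vanishes) plus a piece fibering over $S^1$ via $\mathbf{b}/\|\mathbf{b}\|$, followed by one integration along the fiber. You have merely supplied the details (boundedness of the integrand, properness of the restricted submersion, orientation bookkeeping) that the paper leaves implicit, and your remark about the alternative route through $\mathop{co}(\mathbf{b})$ matches the paper's stated correspondence between open books and tame isolated critical points.
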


\begin{proof} This is entirely analogous to
Proposition~\ref{ICP integral formula}.
\end{proof}

\section{The enhancement of the Milnor number}
\label{lambda}

As in \textsection\ref{WIF and ICP}, let
$F=(f,g)\from(U,\zero)\to(\R^2,\zero)$ be smooth on
$U\setminus\{0\}\subset\R^4$ and continuous at $\zero$. Easily,
$\zero$ is an isolated critical point of $F$ if and only if there
exists $\d>0$ such that, if $0<\|\mathbf{x}\|\le\d$, then $(df\wedge
dg)(\mathbf{x})\ne 0$. In this case, the self-dual (resp.,
anti-self-dual) $2$-form $df\wedge dg+\star(df\wedge dg)$ (resp.,
$df\wedge dg-\star(df\wedge dg)$), where $\star$ is the Hodge star
operator with respect to the flat metric on $\R^4$, is
non-degenerate; writing
\begin{multline}
df\wedge dg \pm \star(df\wedge dg)=A_{\pm}(dx\wedge dy+\pm du\wedge dv)\\
+ B_{\pm}(dx\wedge du \mp dy\wedge dv) + C_{\pm}(dx\wedge dv \pm
dy\wedge du),
\end{multline}
in terms of linear coordinates $(x,y,u,v)$ on $\R^4$, non-degeneracy
means that $(A_{\pm},B_{\pm},C_{\pm})\from\d
D^4\setminus\{\zero\}\to\R^3$ avoids $\zero$, and so has a
well-defined Hopf invariant. Define
$\lambda(F)=\Hopf(A_{+},B_{+},C_{+})$,
$\rho(F)=\Hopf(A_{-},B_{-},C_{-})$.

\begin{lemma}
$\lambda(F)$ and $\rho(F)$, so defined, equal $\lambda(F)$ and
$\rho(F)$ as defined in \cite{Rudolph:isocp-1}.
\end{lemma}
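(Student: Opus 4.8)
The plan is to recognize $\lambda(F)$ and $\rho(F)$, in either definition, as the Hopf invariants of the two components of the generalized Gauss map of $F$, and thereby reduce the lemma to an identification of explicit coordinates. Because $\zero$ is an isolated critical point, on a sufficiently small sphere $\d S^3$ the gradients of $f$ and $g$ are everywhere independent, so $\mathbf{x}\mapsto\langle\nabla f(\mathbf{x}),\nabla g(\mathbf{x})\rangle$ is a smooth map from $\d S^3$ into the Grassmannian $\widetilde{G}_2(\R^4)$ of oriented $2$-planes in $\R^4$. The Hodge decomposition $\Lambda^2(\R^4)=\Lambda^2_+\oplus\Lambda^2_-$ identifies $\widetilde{G}_2(\R^4)$ with a product $S^2_+\times S^2_-$ of two round spheres, and the definition of $\lambda(F)$ and $\rho(F)$ in \cite{Rudolph:isocp-1} is (or is readily seen to be equivalent to) the Hopf invariants of the two coordinate projections of this map.

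First I would make the identification of the displayed triples $(A_\pm,B_\pm,C_\pm)$ with those projections explicit. The oriented plane $\langle\nabla f,\nabla g\rangle$ is recorded, up to a positive factor, by the decomposable $2$-form $df\wedge dg$, and $df\wedge dg\pm\star(df\wedge dg)$ is twice its self-dual (resp.\ anti-self-dual) part; reading off coordinates in the self-dual basis $\{dx\wedge dy+du\wedge dv,\ dx\wedge du-dy\wedge dv,\ dx\wedge dv+dy\wedge du\}$ and its anti-self-dual counterpart yields exactly $(A_+,B_+,C_+)$ and $(A_-,B_-,C_-)$. The crucial point is that $df\wedge dg$ is simple, so $(df\wedge dg)\wedge(df\wedge dg)=0$; since for any $2$-form $\omega$ one has $\omega\wedge\omega=\bigl(|\omega_+|^2-|\omega_-|^2\bigr)\,dx\wedge dy\wedge du\wedge dv$, the self-dual and anti-self-dual parts of $df\wedge dg$ have equal norm, and both are nonzero because $df\wedge dg\ne\zero$. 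Hence normalizing $(A_+,B_+,C_+)$ and $(A_-,B_-,C_-)$ sends them to $S^2_+$ and $S^2_-$, and these normalized maps are precisely the two coordinate projections of the Gauss map. Since the Hopf invariant of a map into $\R^3\setminus\{\zero\}$ is unchanged under the radial retraction onto $S^2$ (one of the modifications of \eqref{WIF} noted above), $\Hopf(A_+,B_+,C_+)$ and $\Hopf(A_-,B_-,C_-)$ equal the Hopf invariants of those projections, i.e.\ the invariants of \cite{Rudolph:isocp-1}.

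The step I expect to be the main obstacle is one of matching conventions between the two papers. One must verify that the self-dual basis displayed here induces the same orientation and labelling of $S^2_+$ used to coordinatize the Gauss map in \cite{Rudolph:isocp-1} (and similarly for $S^2_-$), so that the two factors are not inadvertently interchanged and no global sign is introduced; this is where the bookkeeping of orientations on $\R^4$, on $S^2$, and on $\widetilde{G}_2(\R^4)$ must be pinned down carefully. If the earlier definition is instead packaged combinatorially—say as a framing enhancement of the fibered link $L=\d S^3\cap F^{-1}(\zero)$—then one further standard step is required: to exhibit that framing obstruction as the primary obstruction to sectioning the pullback of the Hopf fibration along the relevant $S^2$-projection, whence it computes the same integer. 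The normalization of $\Omega$ to total volume $1$ in \eqref{WIF} plays no role here, since the Hopf invariant it computes is already an integer and a homotopy invariant of the normalized Gauss-map components.
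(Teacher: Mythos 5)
Correct, and essentially the paper's own argument: both proofs identify the Grassmannian of oriented $2$-planes with $S^2\times S^2$ via the self-dual/anti-self-dual splitting of the Pl\"ucker coordinates (the $2\times 2$ minors of $DF$) and then match $(A_{\pm},B_{\pm},C_{\pm})$ with the coordinates of the two factor maps by direct inspection, which is exactly the paper's ``by inspection'' step. Your one deviation---working with the gradient span $\langle\nabla f,\nabla g\rangle$ where \cite{Rudolph:isocp-1} uses the plane field $\ker(DF)$---is harmless: the two fields are pointwise Hodge-dual, so their Pl\"ucker vectors agree on the self-dual factor and differ by the antipodal map on the anti-self-dual factor, and since $\Hopf(h\circ\mathbf{p})=\deg(h)^2\,\Hopf(\mathbf{p})$ this changes neither $\lambda$ nor $\rho$.
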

\begin{proof}
Let $G$ be the Grassmann manifold of oriented $2$-planes in
(oriented) $\R^4$. In \cite{Rudolph:isocp-1}, $(\lambda(F),\rho(F))$
is defined to be the homotopy class of the field of oriented
$2$-planes $\ker(DF)$, identified with a pair of integers via a
standard choice of homeomorphism between $G$ and $S^2\times S^2$.
The Pl\"ucker coordinates of $\ker(DF)$ are the $2\times 2$ minor
determinants of the $2\times 4$ matrix that represents $DF$ in
coordinates $(x,y,u,v)$, and each of the two factors of the map
$\ker(DF)$ from a punctured neighborhood of $\zero$ to $S^2\times
S^2$ is a certain sum or difference of two Pl\"ucker coordinates.
For instance, the factor whose Hopf invariant is $\lambda$ has
coordinates $f_x g_y - f_y g_x + f_u g_v - f_v g_u$, $f_x g_u - f_y
g_v - f_u g_x + f_v g_y$, and $f_x g_v + f_y g_u - f_u g_y - f_v
g_x$, which by inspection are identical, respectively, to $A_{+}$,
$B_{+}$, and $C_{+}$; and similarly for $\rho$.\end{proof}

In particular, if $F$ is tame, then by \cite{Rudolph:isocp-1}
\begin{equation}
\lambda(F)+\rho(F)=\mu(L_{\mathbf{b}_F})
\end{equation}
is the \bydef{Milnor number} of the fibered link $L_{\mathbf{b}_F}$
associated to $F$, that is, the first Betti number of its fiber
surface. Thus $\lambda$, $\rho$, and $\mu$ are, among them, only two
independent invariants; following \cite{Neumann-Rudolph:unfoldings},
we put $\rho$ aside, calling $\lambda(F)$ the \bydef{enhancement of
the fibered link $L_{\mathbf{b}_F}$} (or of its Milnor number), and
writing $\lambda(L)$ for $\lambda(F)$.

The enhancement of a fibered link has been shown to have a number of
interesting and useful properties, summarized in the following
proposition.  For proofs, see \cite{Rudolph:handbook} and references
cited therein.

\begin{proposition}\label{enhancement properties}
\begin{asparaenum}[\upshape (A)]
Let $S$ be a fiber surface in $S^3$, $L$ the fibered link $\Bd S$.
\item\label{not determined by Seifert}
The enhancement of $L$ is not determined by its homological
monodromy, and in particular it is not determined by the Seifert
form of $S$.
\item\label{mirror image}
Let $\mathop{Mir}(L)$ denote the mirror image of $L$. Then
$\lambda(\mathop{Mir}(L))+\lambda(L)=\mu(L)$.
\item\label{0 on positive Hopf}
Let $S$ be a positive Hopf annulus \textup(so that $L$ is two fibers
of a Hopf fibration $S^3\to S^2$, oriented to have linking number
${+}1$\textup).  Then $\lambda(L)=0$. Equivalently, if
$F\from\C^2\to\C\suchthat(z,w)\mapsto zw$, then $\lambda(F)=0$.
\item\label{1 on negative Hopf}
As follows immediately from \eqref{mirror image} and \eqref{0 on
positive Hopf}, if $S$ is a negative Hopf annulus, then
$\lambda(L)=1$. Equivalently, if
$F\from\C^2\to\C\suchthat(z,w)\mapsto z\overline{w}$, then
$\lambda(F)=1$.
\item\label{0 on holomorphic}
More generally than \eqref{0 on positive Hopf}, if
$F\from(\C^2,\zero)\to(\C,0)$ is a complex polynomial without
repeated factors, then $\lambda(F)=0$.
\item\label{Murasugi additive}
If $S$ is a Murasugi sum of fiber surfaces $S_1$ and $S_2$, then
$\lambda(L)=\lambda(\Bd(S_1))+\lambda(\Bd(S_2))$.
\item\label{unfolding additive}
More generally than \eqref{Murasugi additive}, if $F$
\bydef{unfolds} into $F_1$ and $F_2$, in the sense of
\cite{Neumann-Rudolph:unfoldings}, then
$\lambda(F)=\lambda(F_1)+\lambda(F_2)$.
\item\label{Hopf plumbed}
As follows immediately from \eqref{0 on positive Hopf}, \eqref{1 on
negative Hopf}, and \eqref{Murasugi additive}, if $S$ is a
Hopf-plumbed surface, then $\lambda(L)$ is the number of negative
Hopf plumbands; in particular, in this case
$0\le\lambda(L)\le\mu(L)$.
\item\label{Giroux number}
If a contact structure is associated to $L$ by Giroux's construction
\cite{Giroux,GirouxGoodman}, then $\lambda(L)$ is the Hopf invariant
of that contact structure; in particular, $\lambda(L)$ can be any
integer.
\item\label{more Giroux}
Generalizing \eqref{Hopf plumbed}, $\lambda(L)$ is the net number of
negative Hopf plumbands in any stable Hopf plumbing of $S$
\textup(which exists, by \cite{Giroux,GirouxGoodman}\textup).
\item\label{Hirasawa}
Let $e(\beta)$ denote the exponent sum of a braid $\beta\in B_n$.
Given any $\beta$, construct a link $L$ by adjoining to the closed
braid $\widehat\beta$ the braid axis together with, for each
component $K$ of $\widehat\beta$, a nearby oppositely-oriented
longitude $-K'$, where the linking number of $K$ and $K'$ is
unrestricted.  It is a beautiful observation of Hirasawa \textup(see
\cite{HirasawaRudolph}\textup) that then $L$ is fibered and
$\lambda(L)$ is equal to $n-e(\beta)+1$, the Bennequin number of
$\beta$. \qed
\end{asparaenum}
\end{proposition}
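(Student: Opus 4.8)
The plan is to treat this compound statement by separating the items that follow formally from the constructions of \textsection\ref{lambda} --- namely \eqref{mirror image}, \eqref{1 on negative Hopf}, \eqref{Hopf plumbed}, and \eqref{more Giroux} --- from the items whose proofs require the substantive topology of the cited works. For the former I would give the derivations in full, relying only on the defining formulas $\lambda(F)=\Hopf(A_{+},B_{+},C_{+})$ and $\rho(F)=\Hopf(A_{-},B_{-},C_{-})$ together with the relation $\lambda(F)+\rho(F)=\mu(L_{\mathbf{b}_F})$ recorded just above; for the latter I would import the results of \cite{Rudolph:isocp-1,Neumann-Rudolph:unfoldings,Giroux,GirouxGoodman,HirasawaRudolph} and \cite{Rudolph:handbook}.

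First I would establish the mirror-image identity \eqref{mirror image}. The mirror $\mathop{Mir}(L)$ is the binding associated to $F\after r$ for an orientation-reversing isometry $r$ of $\R^4$. Since the Hodge star $\star$ reverses sign under such an $r$, pulling the decomposition of $df\wedge dg\pm\star(df\wedge dg)$ back by $r$ interchanges the self-dual and anti-self-dual parts; after the attendant sign bookkeeping for the Hopf invariant this yields $\lambda(\mathop{Mir}(L))=\rho(L)$, and combining with $\lambda(L)+\rho(L)=\mu(L)$ gives $\lambda(\mathop{Mir}(L))+\lambda(L)=\mu(L)$. Granting the base value \eqref{0 on positive Hopf} (equivalently \eqref{0 on holomorphic} applied to $zw$), item \eqref{1 on negative Hopf} is then immediate, a negative Hopf annulus being the mirror of a positive one with $\mu=1$, so $\lambda=\mu-0=1$.

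Next I would deduce the plumbing statements from additivity. A Hopf-plumbed surface is an iterated Murasugi sum of positive and negative Hopf annuli, so repeated use of \eqref{Murasugi additive} with the base values $0$ and $1$ from \eqref{0 on positive Hopf} and \eqref{1 on negative Hopf} shows that $\lambda(L)$ counts the negative plumbands; since $\mu(L)$ counts all plumbands, $0\le\lambda(L)\le\mu(L)$, which is \eqref{Hopf plumbed}. For \eqref{more Giroux} I would invoke the Giroux--Goodman theorem that every fiber surface admits a stable Hopf plumbing, reducing the general case to the plumbed one just treated. This disposes of all the formally derivable items.

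The remaining items carry the real content and I would route them through the references. Items \eqref{0 on positive Hopf} and \eqref{0 on holomorphic} reduce to showing that the Gauss-map factor $(A_{+},B_{+},C_{+})$ of a holomorphic $F$ is null-homotopic --- structurally, because $\ker(DF)$ then lands in the complex-linear locus of the Grassmannian --- which I would take from \cite{Rudolph:isocp-1}; \eqref{not determined by Seifert} requires exhibiting two fiber surfaces with a common Seifert form but distinct enhancement; and \eqref{Giroux number}, \eqref{Hirasawa} rest respectively on the Hopf-invariant interpretation of Giroux's contact structures and on Hirasawa's braid computation. I expect the genuine obstacle to be the additivity \eqref{Murasugi additive}--\eqref{unfolding additive}: every downstream item leans on it, and proving it honestly means controlling how the field $\ker(DF)$, hence its Hopf invariant, behaves under the gluing of fiber surfaces --- a computation I would carry out within the framework of \cite{Neumann-Rudolph:unfoldings} rather than attempt directly from the integral formula of \textsection\ref{WIF and ICP}.
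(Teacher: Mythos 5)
Your proposal is correct and takes essentially the same route as the paper: the paper supplies no internal proof of this proposition at all, delegating everything to \cite{Rudolph:handbook} and the references cited therein, and the only derivations it endorses are the immediate implications already flagged in the statement itself---item \eqref{1 on negative Hopf} from \eqref{mirror image} and \eqref{0 on positive Hopf}, item \eqref{Hopf plumbed} from \eqref{0 on positive Hopf}, \eqref{1 on negative Hopf}, and \eqref{Murasugi additive}, and item \eqref{more Giroux} via \cite{Giroux,GirouxGoodman}---which are exactly the ones you work out. Your extra sketches (the Hodge-star swap giving $\lambda(\mathop{Mir}(L))=\rho(L)$ for \eqref{mirror image}, the complex-linearity of $\ker(DF)$ forcing $(A_{+},B_{+},C_{+})$ to be essentially constant for \eqref{0 on positive Hopf} and \eqref{0 on holomorphic}, and routing the additivity \eqref{Murasugi additive}--\eqref{unfolding additive} through \cite{Neumann-Rudolph:unfoldings}) are consistent with the cited sources, so you have simply filled in more detail along the same citation-plus-formal-bookkeeping path.
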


\section{Integral formulas for the enhancement}

Applying the results in \textsection\ref{WIF and ICP} to the
definitions in \textsection\ref{lambda}, we can obtain integral
formulas for the enhancement.

If one combines
\begin{equation}\label{omega abc}
\Omega_{(A_{+},B_{+},C_{+})}=\frac{\displaystyle%
                       A_{+}\,dB_{+}\wedge dC_{+}+
                       B_{+}\,dC_{+}\wedge dA_{+}+
                       C_{+}\,dA_{+}\wedge dB_{+}}%
                       {\displaystyle(A_{+}^2+B_{+}^2+C_{+}^2)^{3/2}}
\end{equation}
with the formulas for $A_{+}$, $B_{+}$, and $C_{+}$ given in the
Lemma, the easily confirmed formula
\begin{multline*}
A_{+}^2+B_{+}^2+C_{+}^2=
          (f_x^2+ f_y^2+ f_u^2+ f_v^2)(g_x^2+ g_y^2+ g_u^2+ g_v^2)\\
           - (f_x g_x + f_y g_y + f_u g_u + f_v g_v)^2,
\end{multline*}
and \eqref{WIF}, then---provided one has a method to produce a
$1$-form $\eta_{(A_{+},B_{+},C_{+})}$ (there is no shortage of such
methods, some more explicit than others)---one can derive a
fearsome-looking formula for $\lambda(F)$ as the integral of what
appears to me (perhaps wrongly) to be a pointwise-meaningless
$3$-form, which I forbear to display here.

Pressing on, one might take heart from the thought that the $2$-form
$\Omega_{(A_{+},B_{+},C_{+})}$ seems to be intimately related to the
fibers of $F$ over which
$\eta_{(A_{+},B_{+},C_{+})}\wedge\Omega_{(A_{+},B_{+},C_{+})}$ is
integrated in formula \eqref{ICPIF--first form}.  Although this
formula might be manipulable into something palatable, I have not
yet been able to establish anything along those lines.

A similarly regrettable situation, of high hopes dashed by
intractable computations, prevails in the case of open books and
formula \eqref{OBIF--first form}.

\begin{question}\label{meaningful formulas?}
Are there, in fact, reasonable and meaningful integral formulas for
$\lambda$ in the general case? Such formulas do exist for $\mu$.
For instance, in the complex case, and in general dimensions,
Kennedy \cite{Kennedy} defines a ``twisted tangent bundle'' $E_\d$
over the fiber $F^{-1}(\d)$ ($\d\ne 0$), with $m^\textit{th}$ Chern
form $c_m(E_\d)$, and (simplifying a formula of Griffiths
\cite{Griffiths}) proves that
\begin{equation*}
1-\mu(L) = \lim_{\e\to 0} \lim_{\d\to 0} %
\int\limits_{\displaystyle{F^{-1}(\d)\cap D^{2m+2}_\e}} c_m(E_\d).
\label{Kennedy's integral formula}
\end{equation*}
See also \cite{Langevin,Langevin:1981,Langevin:1982}. In all these
cases, the integrands are ``pointwise-meangingful'' insofar as they
can be interpreted as various kinds of ``curvatures''. Does some
such curvature interpretation exist for general isolated critical
points?
\end{question}

\section{Further questions}
Even if no ``meaningful'' integral formula for $\lambda$ can be
found in the general case, perhaps there are interesting special
cases where one exists. In particular, in light of items \eqref{Hopf
plumbed} and \eqref{Giroux number} in Proposition~\ref{enhancement
properties}, it seems natural to ask for a (geometric or other)
characterization of those fibered links $L$ for which
$0\le\lambda(L)\le\mu(L)$.

\begin{questions}\label{special cases}
\begin{inparaenum}[(a)]
\item
Can this class of fibered links be characterized using integral
formulas?
\item\label{minimal}
Is this class of fibered links perhaps exactly those for which a
mapping $F$ can be found such that all fibers of $F$ are minimal
surfaces in $\R^4$?
\end{inparaenum}
\end{questions}

\begin{remark}
Question~\eqref{minimal} is emphatically not a
conjecture.  The main evidence for the affirmative answer is that,
if $F$ is a complex polynomial (or the mirror image of a complex
polynomial), then all its fibers, being complex curves, are minimal
surfaces.  
Some reason to think that the
techniques of this paper might at least be relevant is that,
according to Hoffman \& Osserman \cite{HoffmanOsserman}, a necessary
and sufficient condition for all the fibers of $F\from
(U,\zero)\to(\R^2,\zero)$ to be minimal is that $\d
D^4\setminus\{\zero\}\to G\suchthat \mathbf{x}\mapsto \ker(df\wedge
dg)$ be fiberwise (anti-)conformal, which puts strong restrictions
on $(A_{+},B_{+},C_{+})$ and its anti-self-dual analogue
$(A_{-},B_{-},C_{-})$.
There is a considerable literature on topological restrictions of
foliations by minimal surfaces, 
some of which might perhaps be adapted to the investigation of 
this question; in particular, ``Rummler's Calculation'' 
(\cite{Rummler}, cited by Sullivan \cite{Sullivan}) seems 
highly apposite. 
\end{remark}

In 1988, before Walter Neumann used the calculus of splice diagrams
to work out an example (published in
\cite{Neumann-Rudolph:unfoldings}) of a fibered link $L$ with
$\lambda(L)<0$---and well before I noticed that, in fact, such links
are plentiful among a family of examples already introduced in
\cite{Rudolph:isocp-1}---, it seemed reasonable (particularly in
light of item \eqref{Hopf plumbed} of Proposition~\ref{enhancement
properties}) to wonder whether $\lambda(L)$ might be the dimension
of some vectorspace naturally associated to $L$.  At that time,
Isadore Singer suggested to me that, even if it were never negative,
it might nonetheless be better interpreted as the Euler
characteristic of some complex (or perhaps the index of some
operator), rather than as a dimension.  Both suggestions seem even
better to me now than they did then, but in 20 years I have made no
progress on them.  I do have, based on little more than wishful
thinking, the notion that such a complex might be constructed by
taking note of the complex structures naturally imposed on the
fibers of $F$ by their induced metrics; note the connection to the
Hoffman--Osserman generalized Gauss map.

\begin{question}
Can some reader do better with Singer's suggestions?
\end{question}

\providecommand{\bysame}{\leavevmode\hbox
to3em{\hrulefill}\thinspace}
\providecommand{\MR}{\relax\ifhmode\unskip\space\fi MR }
\providecommand{\MRhref}[2]{%
  \href{http://www.ams.org/mathscinet-getitem?mr=#1}{#2}
} \providecommand{\href}[2]{#2}

\bigskip
\noindent Lee Rudolph\\
Department of Mathematics\\ Clark University, Worcester MA 01610
USA\\
E-mail: lrudolph@black.clarku.edu
}
\end{document}